\newtheorem{theorem}{Theorem}[section]
\newtheorem{lemma}[theorem]{Lemma}
\newtheorem{example}[theorem]{Example}
\title{Signless Laplacian spectral radius and fractional matchings of graphs\thanks{Supported by National Natural Science Foundation of China (No.~11571323), Outstanding Young Talent Research Fund of Zhengzhou University (No.~1521315002), the China Postdoctoral Science Foundation (No.~2017M612410) and Foundation for University Key Teacher of Henan Province (No.~2016GGJS-007).}}
\author{Ruifang Liu$^{a}$\thanks{Corresponding author. E-mail address:~rfliu@zzu.edu.cn(R. Liu).}~~ Yu Lu$^{a}$\\~\\
{\footnotesize $^a$ School of Mathematics and Statistics, Zhengzhou
University, Zhengzhou, Henan 450001, China}}
\date{}
\begin{document}
%\openup 1.0\jot
\maketitle

\begin{abstract}
A {\it fractional matching} of a graph $G$ is a function $f$ giving each
edge a number in $[0,1]$ so that $\sum_{e\in \Gamma(v)}f(e)\leq 1$
for each $v\in V(G)$, where $\Gamma(v)$ is the set of edges
incident to $v$. The {\it fractional matching number} of $G$, written
$\alpha'_{*}(G)$, is the maximum of $\sum_{e\in E(G)}f(e)$ over all
fractional matchings $f$. In this paper, we propose the relations
between the fractional matching number and the signless Laplacian spectral radius of a graph. As applications, we also give sufficient spectral
conditions for existence of a fractional perfect matching in a graph in terms of the signless Laplacian spectral radius of the graph and its complement.

\bigskip
\noindent {\bf AMS Classification:} 05C50

\noindent {\bf Key words:} Signless Laplacian
spectral radius; Fractional matching; Fractional perfect matching
\end{abstract}

\section{Introduction}
~~~Throughout this paper, all graphs are simple and undirected. Let $G$ be a
graph with vertex set $V(G)=\{v_{1},v_{2},\ldots,v_{n}\}$ and edge set $E(G)$, and let $|V(G)|$ be the order of $G$ and $|E(G)|$ be the size of $G$.
We denote by $N(v)$ the neighbor set of a vertex $v$, and denote by $|N(v)|$ the degree of
$v.$ Let $\delta$ and $\overline{d}$ be minimum degree and average degree of $G$, respectively.
The {\em complete product} $G_{1}\bigtriangledown G_{2}$ of graphs $G_{1}$ and
$G_{2}$ is the graph obtained from $G_{1}\cup G_{2}$ by joining
every vertex of $G_{1}$ to every vertex of $G_{2}.$ For a vertex subset $S\subseteq V(G),$ the induced subgraph $G[S]$ is the
subgraph of $G$ whose vertex set is $S$ and whose edge set consists
of all edges of $G$ which have both ends in $S$. Let $G^{c}$ be the complement of $G.$

The {\em adjacency matrix} of $G$ is defined to be the matrix
$A(G)=(a_{ij})$, where $a_{ij}=1$ if $v_{i}$ is adjacent to $v_{j}$,
and $a_{ij}=0$ otherwise. Its eigenvalues can be arranged in non-increasing order as follows:
$$\lambda_{1}(G)\geq \lambda_{2}(G)\geq \cdots\geq \lambda_{n}(G)\geq 0.$$
The degree matrix of $G$ is denoted by
$D(G)=\mbox{diag}(d_{G}(v_1), d_{G}(v_2), \ldots, d_{G}(v_{n}))$.
The matrix $Q(G)=D(G)+A(G)$ is called the {\em signless Laplacian}
or the {\em $Q$-matrix} of $G.$ Note that $Q(G)$ is nonnegative,
symmetric and positive semidefinite, so its eigenvalues are real and
can be arranged in non-increasing order as follows:
$$q_{1}(G)\geq q_{2}(G)\geq \cdots\geq q_{n}(G)\geq 0,$$
where $q_{1}(G)$ is {\em the signless Laplacian spectral radius} of graph
$G.$

A \emph{matching} in a graph is a set of edges no two of which are adjacent. The \emph{matching number}
$\alpha'(G)$ is the size of a largest matching. A \emph{fractional matching} of a graph $G$ is a function
$f$ giving each edge a number in $[0,1]$ so that $\sum_{e\in
\Gamma(v)}f(e)\leq 1$ for each $v\in V(G)$, where $\Gamma(v)$ is
the set of edges incident to $v$. If $f(e)\in\{0,1\},$ for every edge $e,$ then $f$ is just a matching.
The \emph{fractional matching
number} of $G$, written $\alpha'_{*}(G)$, is the maximum of
$\sum_{e\in E(G)}f(e)$ over all fractional matchings $f$.
It was shown in \cite{SE} that $\alpha'_{*}(G)\leq n/2$.

A\emph{ fractional perfect
matching} is a fractional matching $f$ with $\sum_{e\in E(G)}
f(e)=n/2$, that is, $\alpha'_{*}(G)=n/2$. If a fractional perfect
matching takes only the values 0 and 1, it is a perfect
matching.

Let $i(G)$ stand for the numbers of isolated vertices vertices of a graph $G$.
In \cite{SE}, the following results on fractional matchings and fractional perfect
matchings were proved.
\begin{description}
\item $\bullet$ For any graph $G$, $\alpha'_{*}(G)\geq\alpha'(G).$
\item $\bullet$ If $G$ is bipartite, then $\alpha'_{*}(G)=\alpha'(G).$
\item $\bullet$ For any graph $G$, $2\alpha'_{*}(G)$ is an integer.
\item $\bullet$ (The fractional analogue of Tutte's 1-Factor Theorem) A graph $G$ has a fractional
\item ~~~perfect matching if and only if
$$i(G-S)\leq|S|$$
\item ~~~for every vertex subset $S\subseteq V(G).$
\item $\bullet$ (The fractional analogue of Berge-Tutte Formula) For any graph $G$, \begin{equation}
\alpha'_{*}(G)=\frac{1}{2}(n-\max\{i(G-S)-|S|\}),\label{e1}
\end{equation}
\item ~~~where the maximum is taken over all $S\subseteq V(G)$.
\end{description}

In recent years, the problem of finding tight sufficient spectral conditions for a graph
possessing certain properties has received much attention. Especially, the study on the relations between the eigenvalues and the matching number was initiated
by Brouwer and Haemers \cite{BA}, for regular graphs, they obtained a sufficient condition on $\lambda_{3}$ for existence of a perfect matching.
Subsequently, Cioab\v{a} et al. \cite{C, CG, CGH} refined and generalized the above result
to obtain a best upper bound on $\lambda_{3}$ to guarantee the existence of a perfect matching.
Further, O and Cioab\v{a} \cite{SO1} determined the relations between the eigenvalues of
a $t$-edge-connected $k$-regular graph and its matching number when $t\leq k-2$. Very recently, Feng and his coauthors \cite{FLH1, FLH} presented sufficient
spectral conditions of a connected graph to be $\beta$-deficient, where the deficiency of a graph is the number of vertices unmatched
under a maximum matching in $G$.

On the fractional matching number, O \cite{SO} studied the connections between the fractional matching number and the spectral radius of a connected graph
with given minimum degree. Xue et al. \cite{Xue} considered the relations between the fractional matching number and the Laplacian spectral radius of a graph.
Along this line, in this paper, we use the technique of proof in \cite{SO} and propose the relations
between the fractional matching number and the signless Laplacian spectral radius of a graph. As applications, we also give sufficient spectral
conditions for existence of a fractional perfect matching in a graph in terms of the signless Laplacian spectral radius of the graph and its complement.

\section{Relations between $\alpha'_{*}(G)$ and $q_{1}(G)$ of graphs involving
minimum degree}

~~~Before proving the main results, we list two useful lemmas. The first one is a famous result due to Dirac.

\begin{lemma}{\bf(\cite{Bd})}\label{dirac}
Let $G$ be a simple graph of minimum degree $\delta$.

\noindent{\rm{(1)}} If $\delta\geq n/2$ and $n\geq 3$, then $G$ is hamiltonian.

\noindent{\rm{(2)}} If $G$ is 2-connected and $\delta\leq n/2$, then $G$ contains a cycle of length at least $2\delta$.
\end{lemma}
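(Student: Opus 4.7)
My plan is to give the classical proof of Dirac's theorem by a longest-path/longest-cycle argument. For part (1), I would first observe that $\delta \geq n/2$ forces $G$ to be connected: for any two vertices $u, v$, either $uv \in E(G)$ or $|N(u)| + |N(v)| \geq n$ forces $N(u) \cap N(v) \neq \emptyset$. Assume toward contradiction that $G$ is not Hamiltonian and let $P = v_0 v_1 \cdots v_k$ be a longest path in $G$. Maximality of $P$ forces $N(v_0) \cup N(v_k) \subseteq V(P)$.

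I then apply the standard Dirac index-shift. Set $S = \{i : v_0 v_i \in E(G)\} \subseteq \{1, \ldots, k\}$ and $T = \{i : v_{i-1} v_k \in E(G)\} \subseteq \{1, \ldots, k\}$. Then $|S| = d(v_0) \geq \delta$ and $|T| = d(v_k) \geq \delta$, while $k \leq n - 1 < 2\delta$, so pigeonhole yields $j \in S \cap T$. The rerouting
\[
C := v_0 v_j v_{j+1} \cdots v_k v_{j-1} v_{j-2} \cdots v_1 v_0
\]
is a cycle spanning $V(P)$. If $V(C) \neq V(G)$, connectedness of $G$ provides a vertex outside $V(C)$ adjacent to some vertex of $C$, producing a path strictly longer than $P$ and contradicting maximality. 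Hence $C$ is Hamiltonian.

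For part (2), under the weaker hypothesis that $G$ is 2-connected with $\delta \leq n/2$, my plan is to let $C$ be a longest cycle of length $c$ and show $c \geq 2\delta$ by contradiction. If $V(C) = V(G)$, then $c = n \geq 2\delta$ from $\delta \leq n/2$, and we are done. Otherwise let $H$ be a component of $G - V(C)$; 2-connectivity forces the attachment set $A = N_G(V(H)) \cap V(C)$ to satisfy $|A| \geq 2$ (else one cut vertex separates $H$ from $C$). Picking distinct $x, y \in A$, one obtains an $xy$-path $Q$ whose interior lies in $V(H)$. Combining $Q$ with either of the two $xy$-arcs of $C$ produces a new cycle, and the plan is to choose the arc and the endpoints so that the resulting cycle is strictly longer than $C$, contradicting maximality; this amounts to transferring the rotation trick from part (1) to a longest path running through $H$, exploiting the lower bound $d(x), d(y) \geq \delta$ together with $\delta \leq n/2$ to control the intersection of $N_C(x)$ and a shifted copy of $N_C(y)$.

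The main obstacle is the sharpening in part (2): the naive "take the longer arc" estimate only yields a cycle of length $\geq c/2 + |Q|$, which does not automatically exceed $c$ when $|Q|$ is small. I expect that the cleanest workaround is the rotation argument applied to a longest path through $V(H) \cup \{x, y\}$, where the pigeonhole inequality $|N_C(x)| + |N_C(y)| \geq 2\delta > c$ (assuming $c < 2\delta$) forces a crossing of neighborhoods along $C$ and yields the desired longer cycle.
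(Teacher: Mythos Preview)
The paper does not prove this lemma at all: it is quoted as a classical result of Dirac with a citation to Bondy--Murty and no argument is given, so there is nothing in the paper against which to compare your attempt.

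Judged on its own, your treatment of part~(1) is the standard longest-path/rotation proof and is correct. Your part~(2), however, is only a plan, and the workaround you propose for the acknowledged obstacle does not go through as stated. In your setup $x,y$ lie on the longest cycle $C$ and are chosen as attachment vertices of the off-cycle component $H$; you then invoke $|N_C(x)|+|N_C(y)|\ge 2\delta$. But $x$ and $y$ were selected precisely because they have neighbours in $H$, so there is no reason their degree should be concentrated on $C$; indeed $|N_C(x)|$ can be as small as~$2$, and the pigeonhole step collapses. The textbook arguments for the $2\delta$ bound do not run through attachment vertices of a longest cycle in this way: they work instead with a longest path $P=v_0\cdots v_k$ (whose endpoints automatically have all neighbours on $P$), use the shifted-neighbourhood count to get either a cycle on $V(P)$ or $k\ge 2\delta$, and then invoke $2$-connectivity to close a long cycle. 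As written, your part~(2) identifies the difficulty but does not resolve it.
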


Given two non-increasing real sequences
$\theta_{1}\geq \theta_{2}\geq \cdots \geq \theta_{n}$ and
$\eta_{1}\geq \eta_{2}\geq \cdots \geq \eta_{m}$
with $n>m,$ the second sequence is said to {\em interlace}
the first one if $\theta_{i}\geq \eta_{i}\geq\theta_{n-m+i}$
for $i=1, 2, \ldots, m.$
The interlacing is {\em tight} if exists an integer $k\in[0, m]$
such that $\theta_{i}=\eta_{i}$ for $1\leq i\leq k$ and $\theta_{n-m+i}=\eta_{i}$ for
$k+1\leq i\leq m.$

Consider an $n\times n$ matrix
\[
M=\left(\begin{array}{ccccccc}
M_{1,1}&M_{1,2}&\cdots &M_{1,m}\\
M_{2,1}&M_{2,2}&\cdots &M_{2,m}\\
\vdots& \vdots& \ddots& \vdots\\
M_{m,1}&M_{m,2}&\cdots &M_{m,m}\\
\end{array}\right),
\]
whose rows and columns are partitioned according to a partitioning $X_{1}, X_{2},\ldots ,X_{m}$ of $\{1,2,\ldots, n\}$. The \emph{quotient matrix} $R$ of the matrix $M$ is the $m\times m$ matrix whose entries are the
average row sums of the blocks $M_{i,j}$ of $M$. The partition is \emph{equitable} if each block $M_{i,j}$ of $M$ has constant row (and column) sum.

\begin{lemma}{\bf(\cite{BH, HW})}\label{q-in}
Let $M$ be a real symmetric matrix. Then the eigenvalues of every
quotient matrix of $M$ interlace the ones of $M.$ Furthermore, if the
interlacing is tight, then the partition is equitable.
%On the other hand, if the partition is equitable, then
%the eigenvalues of the quotient matrix are eigenvalues of $M$ and the spectral radius of the quotient matrix equals the spectral
%radius of $M$.
\end{lemma}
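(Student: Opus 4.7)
The plan is to prove Lemma~\ref{q-in} by translating the quotient matrix into the Rayleigh-quotient framework via an isometric embedding associated with the partition. The first step is to introduce the characteristic matrix $C\in\mathbb{R}^{n\times m}$ of the partition $X_{1},\ldots,X_{m}$, defined by $C_{i,j}=1$ if $i\in X_{j}$ and $0$ otherwise. A direct computation gives $C^{T}C=\mbox{diag}(|X_{1}|,\ldots,|X_{m}|)$, while the definition of the quotient matrix amounts to the identity $R=(C^{T}C)^{-1}C^{T}MC$. Setting $S=C(C^{T}C)^{-1/2}$, the columns of $S$ are orthonormal, i.e.\ $S^{T}S=I_{m}$, and $\widetilde{R}:=S^{T}MS=(C^{T}C)^{1/2}R(C^{T}C)^{-1/2}$ is similar to $R$ and so shares its eigenvalues $\eta_{1}\geq\cdots\geq\eta_{m}$.

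With this in place, the interlacing itself is a routine consequence of the min-max principle. For any $i$-dimensional subspace $V\subseteq\mathbb{R}^{m}$, the image $SV\subseteq\mathbb{R}^{n}$ is also $i$-dimensional (since $S$ is injective), and the identity $\frac{x^{T}\widetilde{R}x}{x^{T}x}=\frac{(Sx)^{T}M(Sx)}{(Sx)^{T}(Sx)}$ holds for every nonzero $x\in V$ thanks to $S^{T}S=I_{m}$. Maximising over $V$ and comparing with the Courant--Fischer formula for $M$ gives $\eta_{i}\leq\theta_{i}$; the companion bound $\eta_{i}\geq\theta_{n-m+i}$ follows by applying the same argument to $-M$, or equivalently by minimising over $(m-i+1)$-dimensional subspaces.

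The nontrivial part is the tightness statement, and this is where I expect the main obstacle. Let $u_{1},\ldots,u_{m}$ be an orthonormal basis of eigenvectors of $\widetilde{R}$ and set $y_{i}:=Su_{i}$, so that $\{y_{i}\}$ is orthonormal with $y_{i}^{T}My_{i}=\eta_{i}$. The delicate point is the standard analysis of equality in Courant--Fischer: an equality $\eta_{i}=\theta_{i}$ arising through a tight interlacing chain forces the extremising vector $y_{i}$ to be an eigenvector of $M$ with eigenvalue $\eta_{i}$. I would spell this out by showing inductively that $y_{1},\ldots,y_{k}$ span an $M$-invariant subspace corresponding to the top $k$ eigenvalues, and symmetrically that $y_{k+1},\ldots,y_{m}$ do so for the bottom $m-k$; together they show that every $y_{i}$ is an eigenvector of $M$.

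Granted this, the rest is formal. Since every $y_{i}$ is an eigenvector of $M$, the column space $\mbox{Range}(S)=\mbox{Range}(C)$ is $M$-invariant, so $MC=CB$ for some $B\in\mathbb{R}^{m\times m}$; left-multiplying by $(C^{T}C)^{-1}C^{T}$ identifies $B=(C^{T}C)^{-1}C^{T}MC=R$. Reading the equation $MC=CR$ entry-wise, for every $i\in X_{p}$ and every $j$ one obtains $\sum_{\ell\in X_{j}}M_{i,\ell}=R_{p,j}$, so each row of the $(p,j)$-block of $M$ has the same sum $R_{p,j}$. Symmetry of $M$ then yields constant column sums in each block as well, which is exactly the definition of an equitable partition.
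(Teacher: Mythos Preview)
The paper does not prove Lemma~\ref{q-in}; it simply quotes the result from the references \cite{BH,HW} and uses it as a black box. Your proposal reproduces the standard proof from those sources (characteristic matrix $C$, orthonormalised matrix $S=C(C^{T}C)^{-1/2}$, Courant--Fischer for the interlacing, and the invariance argument $MC=CR$ for the tightness clause), and the outline is correct; in particular the identification $R=(C^{T}C)^{-1}C^{T}MC$ matches the paper's definition of the quotient matrix via average row sums, and the inductive step showing that tight interlacing forces each $y_{i}=Su_{i}$ to be a genuine eigenvector of $M$ is exactly the argument in Haemers~\cite{HW}. So there is nothing to compare against in the paper itself, but your write-up is sound and aligned with the cited literature.
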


From the Perron-Frobenius Theorem of non-negative matrices, we have the following
lemma.

\begin{lemma}{\bf(\cite{FYZ})}\label{le2}
If $H$ is a subgraph of a connected graph $G,$ then $q_{1}(G)\geq q_{1}(H).$
\end{lemma}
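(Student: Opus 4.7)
The plan is to use the Rayleigh quotient characterization of $q_1$ together with the edge-indexed form of the signless Laplacian quadratic form. Recall that $Q(G)$ is real symmetric, so
\[
q_{1}(G)=\max_{x\neq 0}\frac{x^{T}Q(G)x}{x^{T}x}, \qquad x^{T}Q(G)x=\sum_{uv\in E(G)}(x_{u}+x_{v})^{2}.
\]
First I would reduce to this variational setup and note that since $G$ is connected, $Q(G)$ is a non-negative irreducible symmetric matrix, so by Perron-Frobenius the maximum above is attained at a non-negative vector. The same remark applied to each connected component of $H$ shows that $q_{1}(H)$ admits a non-negative (Perron) eigenvector $y$ on $V(H)$.

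Next I would construct a test vector for $Q(G)$ from $y$. Extend $y$ to a vector $\tilde y$ on $V(G)$ by setting $\tilde y_{v}=0$ for every $v\in V(G)\setminus V(H)$; if $V(H)=V(G)$ take $\tilde y=y$. Then $\tilde y^{T}\tilde y = y^{T}y$, and using the edge-indexed form
\[
\tilde y^{T}Q(G)\tilde y = \sum_{uv\in E(G)}(\tilde y_{u}+\tilde y_{v})^{2}
\geq \sum_{uv\in E(H)}(\tilde y_{u}+\tilde y_{v})^{2}
= \sum_{uv\in E(H)}(y_{u}+y_{v})^{2} = y^{T}Q(H)y,
\]
where the inequality uses $(\tilde y_{u}+\tilde y_{v})^{2}\ge 0$ to discard the contributions from edges in $E(G)\setminus E(H)$ (these are non-negative precisely because $\tilde y\ge 0$, which is the point of choosing the Perron eigenvector). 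Dividing by $\tilde y^{T}\tilde y=y^{T}y$ and invoking the Rayleigh quotient gives $q_{1}(G)\ge q_{1}(H)$.

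The argument has no real obstacle; the only delicate point is the case $V(H)\subsetneq V(G)$, where one must be sure that zero-extending $y$ produces a valid test vector and that the dropped edges contribute non-negatively. Both are handled by selecting $y$ to be the non-negative Perron eigenvector of $Q(H)$, which is guaranteed since $Q(H)$ is entrywise non-negative. This is why connectedness of $G$ (or at least non-negativity of the Perron eigenvector on $H$) is the natural hypothesis, and why the proof extends verbatim to the case where $H$ is merely obtained from $G$ by deleting edges.
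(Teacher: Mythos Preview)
Your argument is correct and is essentially the Rayleigh-quotient unpacking of what the paper merely cites: the paper gives no proof at all for this lemma, only the sentence ``From the Perron--Frobenius Theorem of non-negative matrices, we have the following lemma'' together with a reference. The standard Perron--Frobenius route would observe that after padding $H$ with isolated vertices so that $V(H)=V(G)$ one has $Q(H)\le Q(G)$ entrywise, whence $q_1(H)\le q_1(G)$; your Rayleigh-quotient computation is exactly the proof of that monotonicity statement written out explicitly.

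One small remark: your justification that the discarded edge terms are non-negative ``precisely because $\tilde y\ge 0$'' is superfluous. Each term $(\tilde y_u+\tilde y_v)^2$ is a square and hence non-negative for \emph{any} real vector $\tilde y$, so the inequality
\[
\sum_{uv\in E(G)}(\tilde y_u+\tilde y_v)^2 \;\ge\; \sum_{uv\in E(H)}(\tilde y_u+\tilde y_v)^2
\]
holds for an arbitrary eigenvector $y$ of $Q(H)$, zero-extended to $V(G)$. You therefore do not need Perron--Frobenius or the non-negativity of $y$ at all, and the connectedness hypothesis on $G$ plays no role in the inequality (it is relevant in the paper only for the later strict-inequality and equality discussions). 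This actually makes your argument slightly cleaner than you present it.
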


\begin{lemma}{\bf(\cite{CD})}\label{le3}
Let $G$ be a graph on $n$ vertices with minimum, average and maximum vertex degrees $\delta, \overline{d}$
and $\triangle.$ Then
$$2\delta \leq 2\overline{d}\leq q_{1}(G)\leq 2\triangle.$$ The equalities hold if and only if $G$ is regular.
\end{lemma}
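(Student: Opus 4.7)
My plan is to verify each of the three inequalities in turn and then handle the equality case. The leftmost inequality $2\delta \leq 2\overline{d}$ is immediate from $\overline{d} = \frac{1}{n}\sum_{v \in V(G)} d_G(v) \geq \delta$, with equality exactly when every vertex has degree $\delta$, i.e., when $G$ is regular.

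For the middle inequality $2\overline{d} \leq q_1(G)$, I would evaluate the Rayleigh quotient at the all-ones vector $\mathbf{1}$. Since $Q(G) = D(G) + A(G)$ gives $(Q(G)\mathbf{1})_i = d_G(v_i) + d_G(v_i) = 2d_G(v_i)$, one has $\mathbf{1}^{T} Q(G) \mathbf{1} = 2\sum_{v \in V(G)} d_G(v) = 4|E(G)|$, so the variational characterization yields
$$q_1(G) \;=\; \max_{x \neq 0} \frac{x^{T} Q(G)\, x}{x^{T} x} \;\geq\; \frac{\mathbf{1}^{T} Q(G) \mathbf{1}}{\mathbf{1}^{T} \mathbf{1}} \;=\; \frac{4|E(G)|}{n} \;=\; 2\overline{d}.$$
Equality forces $\mathbf{1}$ to be a top eigenvector of $Q(G)$, which together with $(Q(G)\mathbf{1})_i = 2d_G(v_i)$ forces all degrees to coincide, i.e., $G$ to be regular. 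For the upper bound $q_1(G) \leq 2\triangle$ I would invoke the classical Perron-Frobenius bound for non-negative matrices: the spectral radius of a non-negative symmetric matrix is at most its maximum row sum, and each row of $Q(G)$ sums to $2d_G(v_i) \leq 2\triangle$.

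The only delicate step is the equality case for $q_1(G) = 2\triangle$. For connected $G$, $Q(G)$ is irreducible non-negative, so Perron-Frobenius gives $q_1(G) = 2\triangle$ iff every row sum equals $2\triangle$, i.e., $G$ is $\triangle$-regular; the disconnected case is handled by noting that all three inequalities hold as equalities simultaneously precisely when $\delta = \overline{d} = \triangle$, which squeezes $G$ into regularity. I do not anticipate a serious obstacle here; the only subtlety is being careful with the strictness in Perron-Frobenius when deducing the equality condition from the maximum-row-sum bound.
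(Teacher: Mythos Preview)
The paper does not actually supply its own proof of this lemma: it is quoted verbatim from the cited reference~\cite{CD} and used as a black box. Consequently there is nothing in the paper to compare your argument against.

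That said, your proposed proof is correct and is the standard one. The Rayleigh-quotient argument at the all-ones vector is exactly how $2\overline{d}\le q_1(G)$ is usually derived, and the maximum-row-sum bound for nonnegative matrices gives $q_1(G)\le 2\triangle$. Your treatment of the equality cases is also sound: for the middle inequality, the fact that $Q(G)-q_1 I$ is negative semidefinite means equality in the Rayleigh quotient forces $\mathbf{1}$ to lie in the top eigenspace, hence $2d_G(v_i)=q_1$ for all $i$; and you are right to be careful about $q_1(G)=2\triangle$ in the disconnected case, since that single equality alone does \emph{not} force regularity (e.g.\ $K_3\cup K_2$), so one must read the lemma as asserting that all three equalities hold simultaneously if and only if $G$ is regular.
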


Let $k$ be a positive integer. Let $\mathcal{B}(\delta, k)$ be the set of connected bipartite graphs with minimum degree $\delta$ and the bipartitions
$X$ and $Y$ such that:\\
(i) every vertex in $X$ has degree $\delta,$\\
(ii) $|X|=|Y|+k$, and \\
(iii) the degrees of vertices in $Y$ are equal.\\
Clearly, $|Y|\geq\delta,$ and the complete bipartite graph $K_{\delta+k, \delta}\in \mathcal{B}(\delta, k).$

The following lemma can be found in \cite{SO}. To make our paper self-contained, here we provide a distinct proof
again.

\begin{lemma}{\bf(\cite{SO})}\label{le8}
If $H\in\mathcal{B}(\delta, k),$ then $\alpha'_{*}(H)=\frac{|V(H)|-k}{2}.$
\end{lemma}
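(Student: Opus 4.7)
The plan is to obtain matching upper and lower bounds on $\alpha'_*(H)$, both of which are $\frac{|V(H)|-k}{2}$. Write $n = |V(H)|$, $|X|=|Y|+k$, and let $m=|Y|$, so $n = 2m+k$ and the target value is $m$.

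For the upper bound I would apply the fractional Berge--Tutte formula (\ref{e1}) with the distinguished vertex set $S = Y$. Since $H$ is bipartite, removing $Y$ from $H$ leaves the independent set $X$, so every vertex of $X$ becomes isolated; hence $i(H-Y) = |X| = |Y|+k$ and therefore
\[
\max_{S\subseteq V(H)}\bigl\{i(H-S)-|S|\bigr\} \;\geq\; i(H-Y)-|Y| \;=\; k,
\]
which via (\ref{e1}) yields $\alpha'_*(H)\leq \tfrac{n-k}{2}$.

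For the matching lower bound I would exhibit an explicit fractional matching of weight $m$. Let $d_Y$ denote the common degree of the vertices in $Y$, which exists by hypothesis (iii). Counting edges of $H$ in two ways gives $\delta|X| = d_Y|Y|$, i.e.\ $d_Y = \delta(m+k)/m$, and in particular $d_Y\geq \delta$. Now define $f(e)=1/d_Y$ for every edge $e\in E(H)$. Then at each $y\in Y$ the constraint $\sum_{e\in\Gamma(y)} f(e) = d_Y/d_Y = 1$ is met with equality, and at each $x\in X$ we have $\sum_{e\in\Gamma(x)} f(e) = \delta/d_Y = m/(m+k)\leq 1$, so $f$ is indeed a feasible fractional matching. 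Its total weight is $|E(H)|/d_Y = d_Y|Y|/d_Y = m$, so $\alpha'_*(H)\geq m = \tfrac{n-k}{2}$. Combining the two bounds gives the desired equality.

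No step here looks genuinely hard: the only thing to keep straight is the arithmetic relating $\delta$, $d_Y$, $|X|$, $|Y|$ and $k$, which comes down to the edge-count identity $\delta|X|=d_Y|Y|$. The choice $S=Y$ for the upper bound is forced by the bipartite structure (it is the only way to leave all of $X$ isolated), and the uniform weighting $1/d_Y$ is the natural candidate because it tightens the constraint at the larger-degree side $Y$. The hypothesis that $H\in\mathcal{B}(\delta,k)$ is used twice: (i) so that $d_Y$ is a single well-defined value making uniform weights feasible, and (ii) so that the edge count collapses cleanly to give $|E(H)|/d_Y = |Y|$.
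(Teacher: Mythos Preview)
Your proof is correct. Both bounds are established cleanly: the choice $S=Y$ in the fractional Berge--Tutte formula gives the upper bound, and the uniform weighting $f(e)=1/d_Y$ is a valid fractional matching of total weight $|Y|$, giving the lower bound.

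The paper takes a different, much shorter route: it invokes the previously cited fact (from \cite{SE}) that $\alpha'_*(H)=\alpha'(H)$ for bipartite $H$, and then asserts $\alpha'(H)=|Y|$. The latter equality, though not argued in the paper, follows from Hall's theorem: for $S\subseteq Y$ the edge count $d_Y|S|\leq \delta|N(S)|$ together with $d_Y\geq\delta$ gives $|N(S)|\geq|S|$, so $Y$ is saturated by some matching. Your approach is more self-contained---it avoids both the bipartite LP-integrality fact and Hall's theorem---at the cost of a few more lines. The paper's approach is quicker but leans on background results; yours makes the mechanism (constraint saturation on the high-degree side $Y$) explicit, which is arguably more informative given how the lemma is used later to show tightness of Theorem~\ref{lap1}.
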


\begin{proof}
Note that $H$ is a bipartite graph, then $\alpha'_{*}(H)=\alpha'(H)=|Y|=\frac{|V(H)|-k}{2}.$\hspace*{\fill}$\Box$
\end{proof}

%\begin{proof}
%Note that $i(H-Y)-|Y|=k,$ and by Equation (\ref{e1}), we have $\alpha'_{*}(H)\leq\frac{|V(H)|-k}{2}.$
%On the other hand, $\alpha'_{*}(H)\geq \alpha(H)\geq |Y|=\frac{|V(H)|-k}{2},$ as desired.
%\hspace*{\fill}$\Box$
%\end{proof}

Next we are going to determine the signless Laplacian spectral radius of $H$ in $\mathcal{B}(\delta, k)$. The above notion of equitable partition of a vertex
set in a graph is used. Consider a partition $V(G) = V_{1}\cup V_{2}\cup\cdots \cup V_{t}$ of the vertex set of a graph $G$ into
$t$ non-empty subsets. For $1\leq i, j\leq t$, let $b_{i,j}$ denote the average number of neighbours in $V_{j}$ of the
vertices in $V_{i}.$ The {\it quotient matrix} of this partition is the $t\times t$ matrix whose $(i, j)$-th entry equals $b_{i,j}.$
This partition is {\it equitable} if for
each $1\leq i, j\leq t$, any vertex $v\in V_{i}$ has exactly $b_{i,j}$ neighbours in $V_{j}.$ In this case, the eigenvalues of the
quotient matrix are eigenvalues of $G$ and the spectral radius of the quotient matrix equals the spectral
radius of $G$ (see \cite{BH, GC} for more details).

\begin{lemma}\label{le9}
If $H\in\mathcal{B}(\delta, k),$ then $q_{1}(H)=\frac{2|V(H)|\delta}{|V(H)|-k}.$
\end{lemma}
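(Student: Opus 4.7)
The plan is to exploit the fact that the bipartition $(X,Y)$ gives an equitable two-block partition of $V(H)$, so that $q_1(H)$ is recoverable from a $2\times 2$ quotient matrix.

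First I would set up notation. Write $n=|V(H)|$ and $|Y|=y$, so condition (ii) gives $|X|=y+k$ and $n=2y+k$, hence $y=(n-k)/2$. By double-counting edges between $X$ and $Y$, using (i) and the fact that every vertex in $Y$ has a common degree $d_Y$ by (iii),
\[
\delta(y+k)=|E(H)|=d_Y\cdot y,
\]
so $d_Y=\delta(y+k)/y$.

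Next I would verify that the partition $V(H)=X\cup Y$ is equitable for $Q(H)$: since $H$ is bipartite, every vertex of $X$ has $0$ neighbours in $X$ and exactly $\delta$ in $Y$, while every vertex of $Y$ has $0$ neighbours in $Y$ and exactly $d_Y$ in $X$. The corresponding quotient matrix of $Q(H)=D(H)+A(H)$ is then
\[
R=\begin{pmatrix}\delta&\delta\\ d_Y&d_Y\end{pmatrix},
\]
whose diagonal entries come from the degrees and whose off-diagonal entries come from the (constant) cross-part degrees. A direct computation gives $\det R=0$ and $\operatorname{tr} R=\delta+d_Y$, so the eigenvalues of $R$ are $0$ and $\delta+d_Y$.

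Because the partition is equitable, Lemma~\ref{q-in} (or the standard fact cited from \cite{BH,GC}) together with Perron--Frobenius applied to the nonnegative connected matrix $Q(H)$ implies that the largest eigenvalue of $R$ equals $q_1(H)$. Substituting $d_Y=\delta(y+k)/y$ and $y=(n-k)/2$ yields
\[
q_1(H)=\delta+d_Y=\delta\cdot\frac{2y+k}{y}=\frac{2n\delta}{n-k}=\frac{2|V(H)|\delta}{|V(H)|-k},
\]
as required. The only conceptual step is recognizing that conditions (i)--(iii) are designed precisely to force the bipartition to be equitable; the rest is a routine $2\times 2$ eigenvalue calculation, so I do not anticipate any real obstacle.
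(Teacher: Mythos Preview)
Your proof is correct and follows essentially the same approach as the paper: both form the $2\times 2$ quotient matrix of $Q(H)$ with respect to the equitable bipartition $(X,Y)$, compute its largest eigenvalue $\delta+d_Y=\delta(1+|X|/|Y|)$, and invoke the equitable-partition fact to identify this with $q_1(H)$. Your write-up is slightly more detailed (you explicitly derive $d_Y$ by double counting and spell out why equitability transfers the spectral radius), but the argument is the same.
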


\begin{proof}
Let $Q(H)$ be the signless Laplacian matrix of the graph $H.$ The quotient matrix $R$ of $Q(H)$ on the partitions $X$ and $Y$ is
\begin{equation*}\begin{split}
R&=\left(\begin{array}{ccccccc}
\delta&\delta\\[1mm]
\frac{\delta|X|}{|Y|}&\frac{\delta|X|}{|Y|}\\
\end{array}\right).
\end{split}\end{equation*}
By a simple calculation, the largest eigenvalue of $R$ is $\lambda_{1}(R)=\delta(1+\frac{|X|}{|Y|})=\frac{2|V(H)|\delta}{|V(H)|-k}.$
Note that the partition is equitable. By the above fact mentioned in the paragraph before Lemma \ref{le9}, we have $$q_{1}(H)=\lambda_{1}(R)=\frac{2|V(H)|\delta}{|V(H)|-k}.$$
\hspace*{\fill}$\Box$
\end{proof}

\begin{theorem}\label{lap1}
Let $G$ be a connected graph of order $n\geq3$ and minimum degree $\delta,$ and let
$k$ be a real number belonging to $[0, n)$. If $\ q_{1}(G)<
\frac{2n\delta}{n-k}$, then $\alpha'_{*}(G)>\frac{n-k}{2}.$
\end{theorem}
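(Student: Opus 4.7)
The plan is to argue by contradiction using the fractional Berge--Tutte formula, then to exhibit a bipartite subgraph of $G$ whose signless Laplacian spectral radius is forced to be large by a quotient matrix computation.

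Suppose $\alpha'_{*}(G)\leq \frac{n-k}{2}$. By the fractional analogue of the Berge--Tutte formula (equation \eqref{e1}), there exists $S\subseteq V(G)$ with $i(G-S)\geq |S|+k$. Let $I$ denote the set of isolated vertices of $G-S$, and write $s=|S|$, $i=|I|$, so $i\geq s+k$ and $s+i\leq n$. Because the vertices of $I$ are isolated in $G-S$, no edge of $G$ lies inside $I$, and every $v\in I$ has all $\deg_G(v)\geq \delta$ of its neighbors in $S$; in particular, $s\geq \delta\geq 1$, and the connectedness of $G$ forces $s\geq 1$ as well.

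Let $H$ be the bipartite subgraph of $G$ with parts $I$ and $S$ and edge set $E_G(I,S)$. By Lemma \ref{le2}, $q_{1}(G)\geq q_{1}(H)$. The idea is now to apply Lemma \ref{q-in} to $Q(H)$ with the partition $(I,S)$. A direct computation shows the $2\times 2$ quotient matrix has the form
\[
R=\begin{pmatrix} a & a \\ b & b \end{pmatrix},\qquad a=\frac{e_H(I,S)}{i},\quad b=\frac{e_H(I,S)}{s},
\]
because $I$ is independent in $H$ and $H$ is bipartite. Since $\sum_{v\in I}\deg_H(v)=e_H(I,S)\geq i\delta$, we obtain $a\geq \delta$ and $b\geq i\delta/s$. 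The eigenvalues of $R$ are $0$ and $a+b$, so Lemma \ref{q-in} yields
\[
q_{1}(G)\geq q_{1}(H)\geq \lambda_{1}(R)=a+b\geq \delta+\frac{i\delta}{s}=\frac{\delta(s+i)}{s}.
\]

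The final step is an optimization: set $t=s+i\leq n$. From $i\geq s+k$ we get $s\leq (t-k)/2$, hence $\delta(s+i)/s\geq 2\delta t/(t-k)$. Since the function $t\mapsto t/(t-k)=1+k/(t-k)$ is non-increasing for $t>k$, and $t\leq n$, we conclude
\[
q_{1}(G)\geq \frac{2\delta t}{t-k}\geq \frac{2n\delta}{n-k},
\]
contradicting the hypothesis. The main subtlety I expect is in the last paragraph: correctly turning the two constraints $i\geq s+k$ and $s+i\leq n$ into the desired bound by recognizing monotonicity in $t$; once that is set up, the quotient matrix calculation, which bears a strong family resemblance to the one in Lemma \ref{le9}, does the heavy lifting.
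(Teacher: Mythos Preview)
Your proof is correct and its core is the same as the paper's: assume $\alpha'_*(G)\le (n-k)/2$, invoke the fractional Berge--Tutte formula to obtain $S$ and the set $I$ of isolates, pass to the bipartite subgraph on $S\cup I$, and bound $q_1$ from below via interlacing of the $2\times 2$ quotient matrix of $Q(H)$.

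The one genuine difference is that the paper first separates off a Case~1, $\delta>(n-k)/2$, and there constructs a large fractional matching directly from a long cycle or path using Dirac's theorem (Lemma~\ref{dirac}); only in Case~2, $\delta\le (n-k)/2$, does it run the contradiction/quotient-matrix argument. Your proof shows this case split is unnecessary: the interlacing bound already yields the contradiction for every $\delta$, because the constraints $s\ge 1$, $i\ge s+k$, $s+i\le n$ hold regardless. Your final optimization---writing $t=s+i$ and using that $t/(t-k)$ is non-increasing---is also a bit cleaner than the paper's route through $\lceil k\rceil$ and the bound $s\le (n-\lceil k\rceil)/2$, though both reach the same conclusion. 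So your argument is a mild streamlining of the paper's, at the cost of not exhibiting the explicit fractional matchings that Case~1 provides.
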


\begin{proof}We distinguish the following two cases to prove.

\noindent{\bf Case 1.} $\delta>\frac{n-k}{2}$.

If $\delta\geq \frac{n}{2}$, by Lemma \ref{dirac} (1), then $G$ contains $C_{n}$. Construct a
fractional matching $f$ of $G$: $f(e)=\frac{1}{2}$ for any $e\in E(C_{n})$,
otherwise $f(e)=0$. Then we have
$$\alpha'_{*}(G)\geq \sum_{e\in E(G)}f(e)=\frac{n}{2}>
\frac{n-k}{2}.$$ Otherwise $\frac{n}{2}>\delta>\frac{n-k}{2}.$ If $G$ is 2-connected, by Lemma \ref{dirac} (2),
then $G$ contains a cycle of length at least $2\delta$.
Taking $\frac{1}{2}$ for each edge of the cycle and 0 for other edges of $G$, it is easy to see that
$$\alpha'_{*}(G)\geq\delta>\frac{n-k}{2}.$$
Otherwise $G$ contains a cut vertex $u.$ We can assume that $V_{1}$ and $V_{2}$ are two of
connected components of $G-u$. Let $v_{1}$ and $v_{2}$ be two of neighbors of $u,$ where $v_{1}\in V_{1}$ and $v_{2}\in V_{2}.$
Note that the minimum degree of the vertices in $V_{1}$ or $V_{2}$
is $\delta-1.$ Hence induced subgraphs $G[V_{1}]$ and $G[V_{2}]$ contain paths $v_{1}'P_{1}v_{1}$ and $v_{2}P_{2}v_{2}'$ of length $\delta-1,$ respectively.
So we can find a path $v_{1}'P_{1}v_{1}uv_{2}P_{2}v_{2}'$ of length $2\delta$ in $G.$ Taking $\frac{1}{2}$ for each edge of the path and 0 for others of $G$,
then we have
$$\alpha'_{*}(G)\geq\delta>\frac{n-k}{2}.$$

\noindent{\bf Case 2.} $\delta\leq\frac{n-k}{2}$.

On the contrary, suppose that $\alpha'_{*}(G)\leq\frac{n-k}{2}$. By Equation (1), there exists a vertex subset $S\subseteq V(G)$ satisfying $i(G-S)-|S|\geq \lceil k\rceil$. Let $T$ be the set of isolated vertices
in $G-S$. Note that the neighbors of each vertex of $T$ only belong to $S,$ then there exist at least $\delta$ vertices in $S.$
Let $|T|=t$ and $|S|=s,$ then
$t=i(G-S)\geq s+\lceil k\rceil.$ Consider the bipartite subgraph $H$ with the partitions $S$ and $T.$ Let $a$ be the number of edges having one end-vertex in $S$
and the other in $T.$ Then $a\geq t\delta$. For the partitions $S$ and $T$ in $H,$ the quotient matrix of $Q(H)$ is
\begin{equation*}\begin{split}
R&=\left(\begin{array}{ccccccc}
\frac{a}{s}&\frac{a}{s}\\[1mm]
\frac{a}{t}&\frac{a}{t}\\
\end{array}\right).
\end{split}\end{equation*}
By a simple calculation, the largest eigenvalue of $R$ is $\lambda_{1}(R)=\frac{a}{s}+\frac{a}{t}$. According to Lemmas \ref{q-in} and \ref{le2},
we have \begin{equation}
q_{1}(G)\geq  q_{1}(H)\geq \lambda_{1}(R)\geq \delta(\frac{t}{s}+1)\geq\delta(\frac{\lceil k\rceil}{s}+2)
\geq\delta(\frac{2\lceil k\rceil}{n-\lceil k\rceil}+2)=\frac{2n\delta}{n-\lceil k\rceil}, \label{e8}
\end{equation}
since $a\geq t\delta, t\geq s+\lceil k\rceil$ and $n\geq t+s\geq2s+\lceil k\rceil.$
This yields a contradiction. \hspace*{\fill}$\Box$
\end{proof}

From Lemmas \ref{le8} and \ref{le9}, we can see that there exists graphs $H$ with minimum degree $\delta$ such that
$\alpha'_{*}(H)=\frac{n-k}{2}$ and $q_{1}(H)=\frac{2n\delta}{n-k}.$ In the sense that Theorem \ref{lap1} is best possible.
As an application, we obtain the following relationship between $\alpha'_{*}(G)$ and $q_{1}(G)$ of graphs involving
minimum degree $\delta.$

\begin{theorem}\label{001}
Let $G$ be a connected graph of order $n\geq3$ with minimum degree $\delta$. Then
\begin{equation*}
\alpha'_{*}(G)\geq \frac{n\delta}{\ q_{1}(G)}, \label{e3}
\end{equation*}
with the equality holding if and only if $G\in \mathcal{B}(\delta, k)$ and $k=n-\frac{2n\delta}{q_{1}(G)}$ is an integer.
\end{theorem}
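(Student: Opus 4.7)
The inequality follows immediately from Theorem \ref{lap1}. Set $k := n - 2\alpha'_*(G)$; since $G$ is connected with $n \geq 3$ we have $0 < \alpha'_*(G) \leq n/2$, hence $k \in [0,n)$, and $k$ is an integer because $2\alpha'_*(G)$ is (one of the listed fractional-matching facts). As $\alpha'_*(G) = (n-k)/2$ is exactly the boundary that Theorem \ref{lap1} rules out, the contrapositive of that theorem forces
$$q_1(G) \geq \frac{2n\delta}{n-k} = \frac{n\delta}{\alpha'_*(G)},$$
which rearranges to the desired $\alpha'_*(G) \geq n\delta/q_1(G)$.

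The sufficiency part of the equality characterization is an immediate consequence of Lemmas \ref{le8} and \ref{le9}: for $G \in \mathcal{B}(\delta,k)$ with $k = n - 2n\delta/q_1(G)$ an integer, the two lemmas supply $\alpha'_*(G) = (n-k)/2$ and $q_1(G) = 2n\delta/(n-k)$ respectively, and combining the two identities yields $\alpha'_*(G) = n\delta/q_1(G)$.

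For necessity, assume $\alpha'_*(G) = n\delta/q_1(G)$ and set $k = n - 2\alpha'_*(G)$. Then $q_1(G) = 2n\delta/(n-k)$, so every inequality in the chain
$$q_1(G) \geq q_1(H) \geq \lambda_1(R) \geq \delta\Bigl(\tfrac{t}{s}+1\Bigr) \geq \delta\Bigl(\tfrac{k}{s}+2\Bigr) \geq \tfrac{2n\delta}{n-k}$$
produced inside Case 2 of the proof of Theorem \ref{lap1} must hold with equality. Unpacking in turn: $q_1(G)=q_1(H)$ with $G$ connected forces $V(G)=S\cup T$ and $E(G)=E(H)$, so $G$ itself is bipartite with parts $S$ and $T$; the equality $q_1(H)=\lambda_1(R)$ makes the $(S,T)$-partition equitable (Lemma \ref{q-in}); the identity $a=t\delta$ forces every vertex of $T$ to have degree exactly $\delta$ in $G$, and equitability then equalizes the degrees inside $S$; finally the last two sharpened inequalities fix $|T|=s+k$ and $|S|=(n-k)/2$. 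Setting $X:=T$ and $Y:=S$ then verifies the three defining conditions of $\mathcal{B}(\delta,k)$, giving $G \in \mathcal{B}(\delta,k)$.

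The main obstacle is the first step of the necessity, namely the implication $q_1(G)=q_1(H)\Rightarrow G=H$: this relies on strict monotonicity of the signless Laplacian spectral radius under proper subgraph inclusion in a connected host, which follows from a Perron--Frobenius argument on the nonnegative, irreducible matrix $Q(G)$. The remaining deductions are routine linear-algebraic bookkeeping of what each tight inequality imposes on the degree sequence and the block structure of $G$.
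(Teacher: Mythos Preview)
Your proof of the inequality is correct and in fact tidier than the paper's: rather than letting $k$ decrease to $k^{*}=n-2n\delta/q_1(G)$ through a limit, you plug the integer $k=n-2\alpha'_*(G)$ directly into the contrapositive of Theorem~\ref{lap1} and read off $q_1(G)\ge 2n\delta/(n-k)$. The sufficiency half of the equality claim is treated the same way in both proofs, via Lemmas~\ref{le8} and~\ref{le9}.

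The necessity argument, however, has a real gap---one that the paper's own proof shares and that in fact cannot be repaired, because the stated characterisation is false. You assert that equality forces every step of the chain~\eqref{e8} from Case~2 of Theorem~\ref{lap1} to be tight, but you never check that this chain is available: it is built from a \emph{nonempty} set $S$ with $i(G-S)-|S|\ge k$, and when $k=0$ the only such $S$ may be $\varnothing$. Concretely, equality $\alpha'_*(G)=n\delta/q_1(G)$ together with $\alpha'_*(G)=n/2$ gives $q_1(G)=2\delta$, whence $G$ is regular by Lemma~\ref{le3}; but every connected regular graph has a fractional perfect matching (an easy edge-count shows $i(G-S)\le|S|$ for all $S$), so any connected non-bipartite regular graph---$K_3$ or $C_5$, say---achieves equality yet lies in no $\mathcal{B}(\delta,k)$. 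Your Perron--Frobenius step ``$q_1(G)=q_1(H)\Rightarrow G=H$'' and the equitable-partition bookkeeping are sound once a nonempty witnessing pair $(S,T)$ exists, and for $k\ge 1$ such a pair is guaranteed; the trouble is that this premise fails precisely in the regular case $k=0$.
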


\begin{proof}
For simplicity, we denote $q_{1}(G)$ and $\alpha'_{*}(G)$ by $q_{1}$ and $\alpha'_{*}.$
Theorem \ref{lap1} tells us that if $q_{1}<\frac{2n\delta}{n-k}$, then $\alpha'_{*}>\frac{n-k}{2}.$
Note that $\frac{1}{n-x}$ is an increasing function of $x$ on $[0, n),$ $\frac{2n\delta}{n-k}$ decreases towards
$q_{1}$ as $k$ decreases towards $k^{*},$ where $k^{*}=n-\frac{2n\delta}{q_{1}}.$ By Theorem \ref{lap1}, $\alpha'_{*}>\frac{n-k}{2}$
for each value of $k\in(k^{*}, n).$ Letting $k$ tent to $k^{*}$ and finally equal to $k^{*},$ we obtain $\alpha'_{*}\geq\frac{n-k^{*}}{2}= \frac{n\delta}{\ q_{1}},$ as desired.

Next we consider the equality. If $G\in \mathcal{B}(\delta, k)$ and $k=n-\frac{2n\delta}{q_{1}(G)}$ is an integer. By Lemma \ref{le8}, $\alpha'_{*}(G)=\frac{n-k}{2}=\frac{n\delta}{q_{1}}.$

For the necessity, suppose that $\alpha'_{*}(G)=\frac{n\delta}{q_{1}}.$ This needs equality in each step of the above proof. Hence we have $k=k^{*}=n-\frac{2n\delta}{q_{1}},$ that is to say, $q_{1}=\frac{2n\delta}{n-k}.$
Hence the each step in (\ref{e8}) of the proof of Theorem \ref{lap1} must be an equality. Since $\lceil k\rceil=k,$ $k=n-\frac{2n\delta}{q_{1}(G)}$ must be an integer. Furthermore, since $a=t\delta, t=k+s$ and $n=2s+k$ in (\ref{e8}) of Theorem \ref{lap1}, and $s\geq \delta,$ then $G\in \mathcal{B}(\delta, k).$
\hspace*{\fill}$\Box$
\end{proof}

\section{Relations between fractional perfect matchings and $q_{1}(G)$ of graphs involving minimum degree}
~~~Using Theorem \ref{lap1}, we obtain the following sufficient condition for existence of a fractional perfect matching in a graph $G$ in terms of the signless Laplacian spectral radius of $G.$

\begin{theorem}\label{th10}
Let $G$ be a connected graph of order $n\geq3$ with minimum degree $\delta$. If
$q_{1}(G)< \frac{2n\delta}{n-1}$, then $G$ has a fractional
perfect matching.
\end{theorem}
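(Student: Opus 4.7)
The plan is to obtain this as a direct corollary of Theorem \ref{lap1} by specializing the parameter $k$ to the value $1$. Setting $k=1$ in Theorem \ref{lap1} (which is legal since $1\in[0,n)$ for $n\geq 3$), the hypothesis $q_{1}(G)<\frac{2n\delta}{n-1}$ yields at once
\[
\alpha'_{*}(G)>\frac{n-1}{2}.
\]

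Next I would invoke two of the basic facts about fractional matchings listed in the introduction: namely that $\alpha'_{*}(G)\leq n/2$ for every graph $G$, and that $2\alpha'_{*}(G)$ is always an integer. Combining $\alpha'_{*}(G)>(n-1)/2$ with $2\alpha'_{*}(G)\in\mathbb{Z}$ forces $2\alpha'_{*}(G)\geq n$, i.e.\ $\alpha'_{*}(G)\geq n/2$. Together with the upper bound, this gives $\alpha'_{*}(G)=n/2$, which by definition means that $G$ admits a fractional perfect matching.

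There is essentially no obstacle here beyond correctly identifying the specialization; the only mildly subtle point is the parity step, where one must remember that $\alpha'_{*}(G)>(n-1)/2$ alone does not immediately give $\alpha'_{*}(G)=n/2$ when $n$ is odd (in which case $(n-1)/2$ is already an integer), and so the integrality of $2\alpha'_{*}(G)$ must be invoked explicitly to bridge the gap. Once that observation is made, the proof is a one-line application of Theorem \ref{lap1} together with the Scheinerman–Ullman facts recalled in the introduction.
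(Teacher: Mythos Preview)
Your proposal is correct and matches the paper's proof essentially line for line: the paper also specializes Theorem~\ref{lap1} to $k=1$ to get $\alpha'_{*}(G)>\frac{n-1}{2}$, and then uses the integrality of $2\alpha'_{*}(G)$ to conclude $\alpha'_{*}(G)=\frac{n}{2}$. Your write-up is actually slightly more careful in making the upper bound $\alpha'_{*}(G)\leq n/2$ explicit, but the argument is the same.
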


\begin{proof}
By Theorem \ref{lap1}, taking $k=1$, then we have $\alpha'_{*}(G)>\frac{n-1}{2}.$
Recalling that $2\alpha'_{*}(G)$ is an integer, then $\alpha'_{*}(G)=\frac{n}{2}.$ Thus $G$ has a fractional
perfect matching.\hspace*{\fill}$\Box$
\end{proof}

Further, we consider the sufficient condition for existence of a fractional perfect matching in a graph $G$ in terms of the signless Laplacian spectral radius of its complement.

\begin{theorem}\label{girth}
Let $G$ be a connected graph of order $n\geq3$ with minimum degree $\delta,$ and $G^{c}$ be the complement of $G$. If
$q_{1}(G^{c})<2\delta$, then $G$ has a fractional perfect
matching.
\end{theorem}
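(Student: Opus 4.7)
The plan is to argue by contradiction using the fractional analogue of Tutte's 1-Factor Theorem and then exhibit a large clique inside $G^{c}$ whose signless Laplacian spectral radius already exceeds $2\delta$. Suppose, for the sake of contradiction, that $G$ has no fractional perfect matching. Then by the fractional Tutte condition cited in the introduction, there exists a vertex subset $S\subseteq V(G)$ with $i(G-S)\geq |S|+1$. Let $T$ denote the set of isolated vertices of $G-S$ and set $s=|S|$, $t=|T|$, so that $t\geq s+1$.

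Next I would extract two structural consequences. First, since every vertex $v\in T$ is isolated in $G-S$, each of its at least $\delta$ neighbors in $G$ must lie in $S$; therefore $s\geq\delta$ and hence $t\geq s+1\geq\delta+1$. Second, any two vertices of $T$ are non-adjacent in $G$ (each is isolated in $G-S$, and the other lies outside $S$), so the induced subgraph $G^{c}[T]$ is the complete graph $K_{t}$.

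With these in hand the conclusion follows quickly. Because $K_{t}$ is a subgraph of $G^{c}$, Lemma \ref{le2} gives
\begin{equation*}
q_{1}(G^{c})\;\geq\; q_{1}(K_{t})\;=\;2(t-1)\;\geq\;2\delta,
\end{equation*}
which contradicts the hypothesis $q_{1}(G^{c})<2\delta$. Hence $G$ admits a fractional perfect matching.

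There is no real obstacle here: the entire argument rests on the correct choice of a witness subgraph of $G^{c}$, namely the clique on the isolated vertices of $G-S$, together with the observation that the minimum degree condition forces at least $\delta+1$ such isolates. Unlike Theorem \ref{lap1}, I would not need the quotient-matrix computation, since a bare subgraph monotonicity bound via Lemma \ref{le2} already beats $2\delta$.
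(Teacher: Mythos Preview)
Your argument is correct and matches the paper's proof essentially line for line: assume no fractional perfect matching, obtain $S$ with $i(G-S)>|S|$, let $T$ be the isolated vertices, use the minimum degree to get $|T|\ge\delta+1$, observe $G^{c}[T]\cong K_{|T|}$, and apply Lemma~\ref{le2} to reach $q_{1}(G^{c})\ge 2(|T|-1)\ge 2\delta$. The only cosmetic difference is that the paper invokes Equation~(\ref{e1}) (the fractional Berge--Tutte formula) rather than the fractional Tutte criterion, but the extracted witness set $S$ is the same.
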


\begin{proof}
Suppose, for the sake of contradiction, that $\alpha'_{*}(G)<n/2$. By Equation (\ref{e1}), there
exist a vertex set $S\subseteq V(G)$ such that $i(G-S)-|S|>0$. Let
$T$ be the set of isolated vertices in $G-S$. Since the
neighbors of each vertex of $T$ only belong to $S$, there are at
least $\delta$ vertices in $S$. Hence $|T|\geq |S|+1\geq \delta+1$.
Note that $G^{c}[T]$ is a clique. By Lemma \ref{le2}, we
have
$$q_{1}(G^{c})\geq q_{1}(G^{c}[T])=2(|T|-1)\geq 2\delta,$$
a contradiction. \hspace*{\fill}$\Box$
\end{proof}

Consider the complete bipartite graph $K_{\delta+1, \delta}.$ Clearly
$q_{1}(K_{\delta+1, \delta}^{c})=2\delta$. However, $K_{\delta+1, \delta}$ has no fractional perfect
matching since $\alpha'_{*}(K_{\delta+1, \delta})=\alpha'(K_{\delta+1, \delta})=\delta<\frac{2\delta+1}{2}.$
In this sense that the result of Theorem \ref{girth} is best possible.

In fact, the sufficient condition in Theorem \ref{girth} is very useful and important. There exist some graphs which can be checked for existence of a fractional perfect matching
only by applying Theorem \ref{girth}, not Theorem \ref{th10}.

\begin{example}
Let $H$ be a graph obtained by joining $t$ edges between an isolated vertex and the complete graph $K_{2t},$ where $2\leq t<2t.$ Note that $H$ contains $C_{2t+1},$ hence $H$ has a fractional perfect matching. However by Lemma \ref{le3},
$$q_{1}(H)\geq 2\overline{d}(H)\geq 2\cdot\frac{t+t\cdot(2t-1)+t\cdot2t}{2t+1}=\frac{8t^{2}}{2t+1}>2t+1=\frac{2|V(H)|\delta(H)}{|V(H)|-1}.$$
Clearly, we can not use the sufficient condition in Theorem \ref{th10} to verify. Note that $$q_{1}(H^{c})=q_{1}(K_{1, t})=t+1<2t=2\delta(H),$$
so the sufficient condition in Theorem \ref{girth} is available.
\end{example}

Indeed, if we exclude some graphs, the upper bound in Theorem \ref{girth} can be slightly raised.
Let $H=(\delta+1)K_{1}\bigtriangledown H_{\delta},$ where $H_{\delta}$ be any graph of order $\delta$.
Clearly $q_{1}(H^{c})=2\delta.$ Note that there exists a vertex subset $V(H_{\delta})$ such that
$i(H-V(H_{\delta}))>|V(H_{\delta})|$, hence $H$ has no fractional perfect
matching.

\begin{theorem}
Let $G$ be a connected graph on $n\geq3$ vertices with minimum degree
$\delta$, and $G^{c}$ be the complement of $G$. If
$q_{1}(G^{c})<2\delta+2$, then $G$ has a fractional perfect
matching unless $G\cong (\delta+1)K_{1}\bigtriangledown H_{\delta}.$
\end{theorem}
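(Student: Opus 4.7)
The plan is to strengthen the contradiction argument used for Theorem \ref{girth} and to extract structural information from the extra ``$+2$'' in the hypothesis. I would suppose for contradiction that $G$ has no fractional perfect matching and that $G\not\cong (\delta+1)K_{1}\bigtriangledown H_{\delta}$. By the fractional Berge-Tutte formula (\ref{e1}), there is a set $S\subseteq V(G)$ with $i(G-S)>|S|$. Let $T$ denote the set of isolated vertices of $G-S$; exactly as in Theorem \ref{girth}, every vertex of $T$ has all its neighbours in $S$, which forces $|S|\geq \delta$ and hence $|T|\geq |S|+1\geq \delta+1$.

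Since $T$ is independent in $G$, the induced subgraph $G^{c}[T]$ is a clique of order $|T|$, and Lemma \ref{le2} yields $q_{1}(G^{c})\geq q_{1}(G^{c}[T])=2(|T|-1)$. If $|T|\geq \delta+2$, this already gives $q_{1}(G^{c})\geq 2\delta+2$, contradicting the hypothesis. Therefore $|T|=\delta+1$, and combined with $|T|\geq |S|+1\geq \delta+1$ this also forces $|S|=\delta$.

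The main new step is to rule out any vertex outside $S\cup T$. Suppose $u\in V(G)\setminus (S\cup T)$. Then $u$ lies in $G-S$ but is not isolated there, so by the very definition of $T$, the vertex $u$ is not adjacent in $G$ to any vertex of $T$. Consequently $u$ is adjacent in $G^{c}$ to every vertex of $T$, which means $G^{c}[T\cup\{u\}]$ is a clique of order $\delta+2$. Lemma \ref{le2} then gives $q_{1}(G^{c})\geq 2\delta+2$, again contradicting the hypothesis. Hence $V(G)=S\cup T$.

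To finish, I would observe that every vertex of $T$ has degree at least $\delta$ with all its neighbours in $S$, and $|S|=\delta$, so each vertex of $T$ must be adjacent to every vertex of $S$. Combined with the fact that $T$ is independent in $G$, this forces $G\cong (\delta+1)K_{1}\bigtriangledown G[S]$; setting $H_{\delta}:=G[S]$ contradicts the assumption $G\not\cong (\delta+1)K_{1}\bigtriangledown H_{\delta}$. The only step that genuinely consumes the extra slack from $2\delta$ up to $2\delta+2$ is the ``extra vertex'' trick in paragraph three, and this is where I expect the main (though modest) obstacle to lie: one has to recognise that a vertex outside $S\cup T$ enlarges the clique $G^{c}[T]$ by exactly one more vertex, which is precisely enough to push $q_{1}(G^{c})$ over the threshold.
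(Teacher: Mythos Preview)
Your proposal is correct and follows essentially the same approach as the paper's proof: both argue by contradiction via the fractional Berge--Tutte formula, use the clique $G^{c}[T]$ to force $|T|=\delta+1$ and $|S|=\delta$, and use an extra vertex $u\notin S\cup T$ to enlarge that clique to order $\delta+2$. You simply reorder the two case splits (handling $|T|\geq\delta+2$ before $V(G)\neq S\cup T$, whereas the paper does the reverse) and spell out more carefully than the paper why every vertex of $T$ must be joined to all of $S$, which is a welcome clarification.
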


\begin{proof}
Suppose that $\alpha'_{*}(G)<n/2$. By Equation (\ref{e1}), there exist a
vertex set $S\subseteq V(G)$ such that $i(G-S)-|S|>0$. Let $T$ be
the set of isolated vertices in $G-S$. Note that the
neighbors of each vertex of $T$ only belong to $S,$ then $|T|\geq
|S|+1\geq \delta+1$.

We claim that $V(G)=T\cup S.$ Otherwise we can find a
clique of order $\delta+2$ in $G^{c}$, and $q_{1}(G^{c})\geq
2\delta+2$, a contradiction. If $|T|\geq \delta+2$, then there is
a clique of order $\delta+2$ in $G^{c}$, and $q_{1}(G^{c})\geq
2\delta+2$, a contradiction. Hence $|T|=\delta+1$, and thus $|S|=\delta$,
that is to say, $G\cong (\delta+1)K_{1}\bigtriangledown H_{\delta}.$ \hspace*{\fill}$\Box$
\end{proof}

\small {

}
\end{document}